\newtheorem{theorem}{Theorem}[section]
\newtheorem{corollary}[theorem]{Corollary}
\newtheorem{lemma}[theorem]{Lemma}
\theoremstyle{definition}
\newcommand{\diver}{{\rm div \,}}
\numberwithin{equation}{section}
\begin{document}
    \title[]{On Serrin's overdetermined problem in space forms}

  \date{}
%
%
%
%
%
\author{Giulio Ciraolo and Luigi Vezzoni}

 \address{Giulio Ciraolo \\ Dipartimento di Matematica e Informatica \\ Universit\`a di Palermo\\ Via Archirafi 34\\ 90123 Palermo\\ Italy}
\email{giulio.ciraolo@unipa.it}

\address{Luigi Vezzoni \\ Dipartimento di Matematica G. Peano \\ Universit\`a di Torino\\
Via Carlo Alberto 10\\
10123 Torino\\ Italy}
 \email{luigi.vezzoni@unito.it}

\thanks{This work was partially supported by the project FIRB \lq\lq {\em Differential Geometry and Geometric functions theory}\rq\rq and FIR \lq\lq {\em Geometrical and Qualitative aspects of PDE}\rq\rq, and by GNSAGA and GNAMPA (INdAM) of Italy.\\
}

\keywords{Overdetermined PDE, P-function, Space forms, Rigidity.}
    \subjclass{Primary 35R01, 35N25, 35B50; Secondary: 53C24, 58J05.}

\begin{abstract}
We consider an overdetermined Serrin's type problem in space forms and we generalize Weinberger's proof in \cite{We} by introducing a suitable P-function.	
\end{abstract}

\maketitle

\section{Introduction}
In the seminal paper \cite{serrin} Serrin proved that if there exists a solution to 
\begin{equation} \label{eq_v_Eucl}
\Delta v + f(v) = 0
\end{equation}
in a bounded domain $\Omega \subset \mathbb{R}^n$ such that 
$$
v=0 \quad \text {and} \quad  v_\nu=const \quad \text{ on } \partial \Omega\,,
$$
then $\Omega$ must be a ball and $v$ radially symmetric. The proof in \cite{serrin} makes use of the method of moving planes and actually applies to more generally uniformly elliptic operators (see \cite{serrin}). 

In \cite{We} Weinberger considerably simplified the proof of Serrin's result in the case $\Delta v = -1$
by considering what is nowadays  called P-function and using some integral identities. The approach of Weinberger, as well as the use of a P-function, inspired several works in the context of elliptic partial differential equations (see e.g. \cite{CGS,FK,FV,FGK,GL,Payne,ronco,sperb} and references therein). 

In the present paper we investigate such symmetry results from a broader perspective of the ambient space. We consider overdetermined problems in space forms by assuming the ambient space to be a complete simply-connected Riemannian manifold with constant sectional curvature $K$.  Up to homoteties we may assume $K=0,-1,1$; the case $K=0$ corresponds to the case of the Euclidean space, $K=-1$ is the Hyperbolic space and $K=1$ is the unitary sphere with the round metric.  

In space forms, Serrin's symmetry result was proved in \cite{KP} and \cite{Mo} by adapting the proof of Serrin \cite{serrin}, i.e. by using the method of moving planes. The aim of the present paper is to prove Serrin's result in space forms by using an approach analogous to the one of Weinberger by using a suitable P-function associated to the equation $\Delta v + nKv = -1$. As in the Euclidean case, our approach is suitable only for the equation that we are considering, and does not fit with more general equations of the form \eqref{eq_v_Eucl}.

Let $(M,g)$ be a Riemannian manifold isometric to one of the following three models: the Eucliden space $\mathbb R^n$,  the Hyperbolic space $\mathbb H^n$, the hemisphere $\mathbb S^n_+$. The three models can be described as the warped product space $M=I\times \mathbb S^{n-1}$ equipped with the rotationally symmetric metric 
$$
g=dr^2+h^2\,g_{\mathbb S^{n-1}},
$$  
where $g_{\mathbb S^{n-1}}$ is the round metric on the $(n-1)$-dimensional sphere $\mathbb S^{n-1}$ and 
\begin{enumerate}
\item[-] $I=[0,\infty)$ and $h(r)=r$ in the Euclidean case ($K=0$);

\vspace{0.1cm}
\item[-] $I=[0,\infty)$ and $h(r)=\sinh(r)$ in the hyperbolic case ($K=-1$);

\vspace{0.1cm}
\item[-] $I=[0,\pi/2)$ and $h(r)=\sin(r)$ in the spherical case ($K=1$). 
\end{enumerate}

Our main result is the following. 

\begin{theorem} \label{thm_main}
Let $\Omega\subset M$ be a bounded connected domain with boundary $\partial \Omega$ of class $C^1$. Let $v$ be the solution to 
\begin{equation} \label{problem1v}
\begin{cases}
\Delta v+nK v = -1 & \text{in } \Omega \,,\\
v=0 & \text{on } \partial \Omega \,,
\end{cases}
\end{equation}
and assume that 
\begin{equation} \label{v_nu_overdet}
|\nabla v|=c \quad  \text{on } \partial \Omega \,,
\end{equation}
for some postive constant $c$. 
Then $\Omega$ is a geodesic ball $B_R$ and $v$ depends only on the distance from the center of $B_R$. 
\end{theorem}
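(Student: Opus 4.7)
My plan is to adapt Weinberger's argument to the curved setting by introducing the curvature-corrected P-function
$$P := |\nabla v|^2 + K v^2 + \tfrac{2}{n}\,v,$$
which reduces to Weinberger's choice when $K = 0$. Applying the Bochner identity on $(M,g)$ with $\mathrm{Ric} = (n-1)Kg$, and using $\Delta v = -1 - nKv$ (so that $\nabla \Delta v = -nK\,\nabla v$), a short computation yields $\tfrac12\Delta|\nabla v|^2 = |\nabla^2 v|^2 - K|\nabla v|^2$. Adding the contributions of $\Delta(Kv^2)$ and $\Delta(\tfrac{2v}{n})$, the curvature-dependent terms cancel and
$$\Delta P = 2\Bigl(|\nabla^2 v|^2 - \tfrac{(\Delta v)^2}{n}\Bigr) \ge 0$$
by pointwise Cauchy--Schwarz. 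The coefficients $K$ and $\tfrac{2}{n}$ in $P$ are tuned precisely to make these cancellations happen. Since the boundary data give $P \equiv c^2$ on $\partial\Omega$, the weak maximum principle then yields $P \le c^2$ in $\Omega$.

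The main task is to upgrade this to $P \equiv c^2$ via a Pohozaev-type identity adapted to $(M,g)$. The natural multiplier is the conformal vector field $X = \nabla\phi$, where $\phi$ is the radial potential satisfying $\nabla^2\phi = h'(r)\,g$; explicitly $\phi$ equals $r^2/2$, $\cosh r - 1$, or $1-\cos r$ in the three models, $\operatorname{div} X = n\,h'(r)$, and a further useful identity is $\Delta h' = -nKh'$. I would integrate $\operatorname{div}\bigl(|\nabla v|^2\,X - 2(X\!\cdot\!\nabla v)\,\nabla v\bigr)$ over $\Omega$, evaluate the boundary term using $v=0$, $|\nabla v|=c$ and $\nabla v = -c\nu$ on $\partial\Omega$, and combine the resulting interior equation with a weighted integration by parts of the PDE (multiplying $\Delta v + nKv = -1$ by $h'v$). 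After the dust settles, I expect everything to collapse into
$$\int_\Omega h'(r)\,\bigl(c^2 - P\bigr)\,dV = 0.$$
Since $h' > 0$ on the admissible range of $r$ and $c^2 - P \ge 0$ from the maximum principle above, this forces $P \equiv c^2$ in $\Omega$. The main obstacle is organizing the Pohozaev-type algebra cleanly in the curved setting and verifying that the curvature contributions conspire to produce precisely the weight $h'(r)$ attached to $P$.

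Once $P \equiv c^2$, equality in Cauchy--Schwarz forces
$$\nabla^2 v = \tfrac{\Delta v}{n}\,g = -\tfrac{1+nKv}{n}\,g,$$
an Obata-type equation on a space form whose solutions are precisely the radial functions around a fixed point. Combined with $v=0$ on $\partial\Omega$, this forces $\Omega$ to be a geodesic ball $B_R$ around that center and $v$ to depend only on the geodesic distance from it, completing the proof of Theorem~\ref{thm_main}.
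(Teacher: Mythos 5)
Your proposal is correct and follows essentially the same route as the paper: the same P-function, Bochner plus Cauchy--Schwarz for subharmonicity, a Pohozaev-type identity with the conformal field $X=h\,\partial_r=\nabla\phi$ and weight $\dot h$ to force $P\equiv c^2$, and then the equality case $\nabla^2 v=-\bigl(\tfrac1n+Kv\bigr)g$ to conclude radial symmetry. The two steps you defer --- the Pohozaev algebra, which does indeed collapse to $\int_\Omega \dot h\,(c^2-P)=0$, and the rigidity of the Hessian equation with $v=0$ on $\partial\Omega$, which requires a positivity argument for $v$ (nontrivial in the hemisphere case, via the first Dirichlet eigenvalue) --- are precisely the paper's Lemma 2.3 and Lemma 2.4.
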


In theorem \ref{thm_main} we may assume, up to isometries, that $B_R$ is centered at the origin. In this case $v$ is given by 
$$
v(r)=\frac{H(R)-H(r)}{n \dot h(R)} \,,
$$
with $H=\int_0^r h(s) ds$. Indeed, since the Laplacian of a radial  
radial function $u=u(r)$ is given by $\Delta u = \ddot u + (n-1)\dot h h^{-1} \dot u$, a straightforward computation yields that  $v$ solves \eqref{problem1v}. Furthermore, by computing the first derivative of $v$, we deduce that $c$ and $R$ are related by 
\begin{equation*}
c=\frac{h(R)}{n\dot h(R)} \,.
\end{equation*}

We notice that for $K=0$, \eqref{problem1v} reduces to the classical model problem $\Delta v = -1$ in the Euclidean space. The extra term $nKv$ is the one needed to obtain that the Hessian of the solution in the radial case is proportional to the metric. Moreover, this allows us to consider the $P$-function
\begin{equation} \label{Pfunction}
P(v)=|\nabla v|^2 + \frac{2}{n} v + K v^2\,,
\end{equation}
which is subharmonic when $v$ solves \eqref{problem1v}. 

Equation \eqref{problem1v} arises from the study of constant mean curvature hypersurfaces in space forms. Indeed, it is known from Reilly's paper \cite{reilly1} that a possible approach 
to prove Alexandrov {\em soap bubble theorem} in the Euclidean space is by considering the torsion potential, i.e. the solution to $\Delta v =-1$, and apply Reilly's identity. In space forms this approach was generalized by Qui and Xia in \cite{QuiXia} by replacing equation $\Delta v = -1$ with $\Delta v + nKv = -1$. 

We also mention that Alexandrov's soap bubble theorem in the Euclidean space can be proved via Serrin's overdetermined problem for the equation $\Delta v= -1$ (see \cite{reilly1}[remark at p. 468]). Hence, theorem \ref{thm_main} can be used to give an alternative proof to Alexandrov theorem in space forms by using the generalization of Reilly's identity in \cite{QuiXia}.

\medskip 
In the next section  we write $\nabla^2$ to denote the Hessian of a function and, for $X, Y$ vector fields, we write $X\cdot Y$ instead of $g(X,Y)$. 
\section{Proof of the result}

%
%
%


We first prove that the $P$-function \eqref{Pfunction} is subharmonic.
\begin{lemma} \label{lemma_P_func}
Let $v$ be a solution to
$$
\Delta v+nK v = -1
$$
and let $P$ be given by \eqref{Pfunction}.
Then 
$$
\Delta P(v) \geq 0 \quad  \text{ in } \Omega \,.
$$ 
Moreover, $\Delta P(v)=0$ if and only if 
\begin{equation}\label{nabla2v_eq}
\nabla^2 v = - \left( \frac 1n + K v\right) g \,.
\end{equation}

\end{lemma}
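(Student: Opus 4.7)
The plan is to compute $\Delta P$ directly using the Bochner (Weitzenböck) identity and then apply the standard Cauchy--Schwarz bound on the Hessian. Since $(M,g)$ has constant sectional curvature $K$, its Ricci tensor satisfies $\mathrm{Ric}=(n-1)K\,g$, so for any smooth function $v$ Bochner gives
\begin{equation*}
\tfrac{1}{2}\Delta|\nabla v|^2 \;=\; |\nabla^2 v|^2 \;+\; \nabla v\cdot \nabla(\Delta v) \;+\; (n-1)K|\nabla v|^2.
\end{equation*}

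Next I would use the equation $\Delta v=-1-nKv$ to eliminate the derivatives of $\Delta v$. This yields $\nabla(\Delta v)=-nK\nabla v$, so $\nabla v\cdot\nabla(\Delta v)=-nK|\nabla v|^2$, which combines with the Ricci term to give $-K|\nabla v|^2$. I would then compute the remaining pieces of $P$ term by term: $\Delta(\tfrac{2}{n}v)=-\tfrac{2}{n}-2Kv$, and $\Delta(v^2)=2|\nabla v|^2+2v\Delta v=2|\nabla v|^2-2v-2nKv^2$. Adding everything up, the $K|\nabla v|^2$ contributions cancel and one is left with the clean identity
\begin{equation*}
\tfrac{1}{2}\Delta P(v) \;=\; |\nabla^2 v|^2 \;-\; \tfrac{1}{n} \;-\; 2Kv \;-\; nK^2 v^2.
\end{equation*}

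The key step is now the pointwise Newton inequality $|\nabla^2 v|^2\ge \frac{(\Delta v)^2}{n}$, which applied to $\Delta v=-(1+nKv)$ gives
\begin{equation*}
|\nabla^2 v|^2 \;\ge\; \frac{(1+nKv)^2}{n} \;=\; \frac{1}{n} + 2Kv + nK^2v^2.
\end{equation*}
Substituting this into the identity for $\Delta P$ yields $\Delta P\ge 0$, proving the subharmonicity. Moreover equality in the Newton inequality holds at a point if and only if $\nabla^2 v$ is a multiple of $g$ there, namely $\nabla^2 v=\tfrac{\Delta v}{n}g=-\bigl(\tfrac{1}{n}+Kv\bigr)g$, which is exactly \eqref{nabla2v_eq}. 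Hence $\Delta P(v)\equiv 0$ in $\Omega$ precisely when \eqref{nabla2v_eq} holds.

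There is no serious obstacle in this argument; the only thing to be careful about is the algebraic cancellation of the curvature terms, which is what singles out the particular combination $\frac{2}{n}v+Kv^2$ in the definition of $P$ and the particular equation $\Delta v+nKv=-1$. Everything else is bookkeeping.
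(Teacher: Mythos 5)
Your proposal is correct and follows essentially the same route as the paper: the Bochner--Weitzenböck formula with $\mathrm{Ric}=(n-1)K g$, the Cauchy--Schwarz/Newton bound $|\nabla^2 v|^2\ge (\Delta v)^2/n$, and substitution of the equation $\Delta v=-1-nKv$, with the equality case characterized by $\nabla^2 v=\tfrac{\Delta v}{n}g$. The only difference is cosmetic: you write out $\Delta P$ as an exact pointwise identity before applying the inequality, whereas the paper keeps the terms packaged as Laplacians, but the argument is the same.
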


\begin{proof}
From the Bochner-Weitzenb\"ock formula in space forms
\begin{equation*}
\frac12 \Delta|\nabla v|^2= |\nabla^2v|^2+D(\Delta v)\cdot \nabla v+(n-1)K\nabla v\cdot \nabla v
\end{equation*}
and from Cauchy-Schwartz inequality we obtain that 
\begin{equation*}
\frac12 \Delta|\nabla v|^2\geq \frac 1n (\Delta v)^2+D(\Delta v)\cdot \nabla v+(n-1)K\nabla v\cdot \nabla v \,.
\end{equation*}
From \eqref{problem1v} we find that 
\begin{equation*}
\begin{aligned}
\frac12 \Delta|\nabla v|^2 \geq &\, \frac{1}{n}(\Delta v)(-1-nKv)+D(-1-nKv)\cdot \nabla v+(n-1)K\nabla v\cdot \nabla v\\
=&\,-\frac{1}{n}\Delta v-K v\Delta v-K \nabla v\cdot \nabla v\\
=&\,-\frac{1}{n}\Delta v-\frac{K}{2}\Delta v^2\,,
\end{aligned}
\end{equation*}
where in the last inequality we have used that $\diver(v^2/2)=v\Delta v + |\nabla v|^2$. Hence $\Delta P(v) \geq 0$.

From the argument above, we readily see that $\Delta P(v)=0$ if and only if 
$$
n |\nabla^2 v|^2 = (\Delta v)^2 \,,
$$
which implies that $\nabla^2 v$ is a multiple of the metric $g$. Since $v$ satisfies \eqref{problem1v} then we obtain \eqref{nabla2v_eq}.
\end{proof}

Lemma \ref{lemma_P_func} will be used in the following form.

\begin{corollary} \label{coroll_Pfunc}
Let $v$ be the solution to \eqref{problem1v} and assume that \eqref{nabla2v_eq} holds. Then either
\begin{equation} \label{P_constant}
P(v)=c^2 \quad \text{in } \bar \Omega
\end{equation}
or 
\begin{equation} \label{minore_stretto}
c^2 \int_\Omega \dot h  >  \left(1 + \frac 2n \right)\left(  \int_\Omega \dot h v  - K \int_\Omega  h v v_r \right) \,.
\end{equation}
\end{corollary}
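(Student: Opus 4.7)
The plan is to combine the subharmonicity of $P$ from Lemma \ref{lemma_P_func} with the overdetermined boundary data (so that $P=c^2$ on $\partial\Omega$) to get the pointwise bound $P\le c^2$ on $\bar\Omega$, and then to run this through a weighted integration by parts with weight $\dot h$.

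First I would observe that, since $v = 0$ and $|\nabla v| = c$ on $\partial\Omega$, the definition \eqref{Pfunction} forces $P(v) = c^2$ on $\partial\Omega$. Combined with $\Delta P \ge 0$ from Lemma \ref{lemma_P_func}, the strong maximum principle yields the dichotomy: either $P \equiv c^2$ on $\bar\Omega$, which is \eqref{P_constant}, or $P < c^2$ strictly in $\Omega$. In each of the three models $\dot h$ is positive on $\bar\Omega$ (equal to $1$, $\cosh r$, and $\cos r$ with $r<\pi/2$ respectively), so in the second case integrating the pointwise inequality against $\dot h$ gives the strict weighted bound $\int_\Omega \dot h \, P < c^2 \int_\Omega \dot h$.

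It then remains to compute $\int_\Omega \dot h \, P(v)$ and recognize it as the right-hand side of \eqref{minore_stretto}. Two identities do the work. An integration by parts for $\int_\Omega \dot h |\nabla v|^2$ using $\nabla \dot h = \ddot h\,\partial_r = -Kh\,\partial_r$ together with $\Delta v = -1 - nKv$ (the Dirichlet condition kills every boundary term) produces $K\int_\Omega h v v_r + \int_\Omega \dot h v + nK\int_\Omega \dot h v^2$. Second, the vector field $h\,\partial_r$ is the standard closed conformal vector field of the ambient space form, satisfying $\diver(h\,\partial_r) = n\dot h$; expanding $\diver(v^2 h\,\partial_r) = 2 h v v_r + n v^2 \dot h$ and applying the divergence theorem with $v|_{\partial\Omega}=0$ gives the key cancellation $\int_\Omega \dot h v^2 = -\tfrac{2}{n}\int_\Omega h v v_r$.

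Substituting both identities into the expansion $\int_\Omega \dot h P = \int_\Omega \dot h |\nabla v|^2 + \tfrac{2}{n}\int_\Omega \dot h v + K\int_\Omega \dot h v^2$, the $\int \dot h v^2$ contributions collapse into $\int h v v_r$ contributions and all coefficients line up with a common factor $1+2/n$, yielding
\[
\int_\Omega \dot h\, P(v) \, dV = \left(1 + \tfrac{2}{n}\right)\left(\int_\Omega \dot h v \, dV - K\int_\Omega h v v_r \, dV\right),
\]
which combined with the strict inequality $\int_\Omega \dot h P < c^2 \int_\Omega \dot h$ is precisely \eqref{minore_stretto}. The only real conceptual obstacle will be identifying the correct weight: the function $\dot h$ enters naturally both as an eigenfunction of the operator $\Delta + nK$ governing $v$, and as the divergence of the closed conformal field $h\,\partial_r$, and this double role is what forces the factor $(1+2/n)$ uniformly in front and assembles the exact combination on the right-hand side.
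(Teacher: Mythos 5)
Your proposal is correct and follows essentially the same route as the paper: the strong maximum principle applied to the subharmonic $P$ with $P=c^2$ on $\partial\Omega$ gives the dichotomy, and in the strict case one integrates $P<c^2$ against the positive weight $\dot h$ and integrates by parts via $\diver(\dot h v \nabla v)$ using $\ddot h=-Kh$ and $\Delta v=-1-nKv$. The only difference is that you make explicit, through $\diver(v^2 h\partial_r)=2hvv_r+n\dot h v^2$, the identity $\int_\Omega \dot h v^2=-\tfrac2n\int_\Omega hvv_r$ that converts the intermediate bound into the exact form \eqref{minore_stretto}, a step the paper leaves implicit after its last displayed inequality.
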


\begin{proof}
From lemma \ref{lemma_P_func} we have that $\Delta P(v) \geq 0$. Since $P(v)=c^2$ on $\partial \Omega$, by the strong maximum principle either $P(v) = c^2 $ in $\bar \Omega$ or $P(v) < c^2 $ in $\Omega$. If we assume that $P(v)<c^2$ in $\Omega$ then, being $\dot h>0$, 
\begin{equation*}
c^2 \int_\Omega \dot h  >  \int_\Omega \dot h |\nabla v|^2 + \frac{2}{n} \int_\Omega \dot h v + K \int_\Omega \dot h v^2 
\end{equation*}
and since
\begin{equation} \label{div_hvDv}
\diver(\dot h v \nabla v)= \dot h |\nabla v|^2 + \dot h v \Delta v + \ddot h v v_r  
\end{equation}
and $v=0$ on $\partial \Omega$, being $\ddot h = -K h$, we obtain that 
\begin{equation*}
\begin{split}
c^2 \int_\Omega \dot h  & >  - \int_\Omega \dot h v \Delta v - \int_\Omega \ddot h v v_r + \frac{2}{n} \int_\Omega \dot h v + K \int_\Omega \dot h v^2  \\  & = (n+1)K \int_\Omega \dot h v^2 +  \left(1 + \frac 2n \right) \int_\Omega \dot h v  + K \int_\Omega  h v v_r \,,
\end{split}
\end{equation*}
which completes the proof.
\end{proof}

As we will see, \eqref{minore_stretto} will give a contradiction which follows from Poho\v{z}aev inequality.

\begin{lemma} \label{lemma_poho}
Let $v$ be the solution to \eqref{problem1v} and assume that $v$ satisfies \eqref{nabla2v_eq}. Then
\begin{equation} \label{uguale}
c^2 \int_\Omega \dot h  =  \left(1 + \frac 2n \right)\left(  \int_\Omega \dot h v  - K \int_\Omega  h v v_r \right) \,.
\end{equation}
\end{lemma}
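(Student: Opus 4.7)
The plan combines three facts: the hypothesis \eqref{nabla2v_eq} forces $P(v)\equiv c^2$ on $\bar\Omega$; the divergence identity \eqref{div_hvDv} already used in Corollary \ref{coroll_Pfunc} expresses $\int_\Omega \dot h|\nabla v|^2$ in terms of lower-order integrals; and a new integration by parts relates $\int_\Omega \dot h v^2$ to $\int_\Omega h v v_r$ via the warped-product structure.

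For the first fact, Lemma \ref{lemma_P_func} shows that under \eqref{nabla2v_eq} one has $\Delta P(v)=0$, and since $P(v)=c^2$ on $\partial\Omega$ the strong maximum principle forces $P(v)\equiv c^2$ on $\bar\Omega$. Multiplying by $\dot h$ and integrating gives
$$
c^2\int_\Omega \dot h=\int_\Omega \dot h|\nabla v|^2+\frac{2}{n}\int_\Omega \dot h v+K\int_\Omega \dot h v^2.
$$
For the second, exactly as in the proof of Corollary \ref{coroll_Pfunc}, integrating \eqref{div_hvDv} over $\Omega$ and using $v=0$ on $\partial\Omega$, $\Delta v=-1-nKv$ and $\ddot h=-Kh$ yields
$$
\int_\Omega \dot h|\nabla v|^2=\int_\Omega \dot h v+nK\int_\Omega \dot h v^2+K\int_\Omega h v v_r.
$$

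The new ingredient I would need is the auxiliary identity $\int_\Omega \dot h v^2=-\tfrac{2}{n}\int_\Omega h v v_r$. To prove it I would set $H(r)=\int_0^r h(s)\,ds$, so that $\nabla H=h\,\partial_r$ and, by the radial Laplacian formula recalled after Theorem \ref{thm_main}, $\Delta H=\ddot H+(n-1)\dot h h^{-1}\dot H=n\dot h$. Since $h v_r=\nabla H\cdot\nabla v$, integration by parts (with $v=0$ on $\partial\Omega$ killing the boundary term) gives $\int_\Omega h v v_r=\tfrac12\int_\Omega \nabla H\cdot\nabla v^2=-\tfrac12\int_\Omega v^2\Delta H=-\tfrac{n}{2}\int_\Omega \dot h v^2$. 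Substituting this auxiliary identity back into the formula for $\int_\Omega \dot h|\nabla v|^2$, and then into the formula for $c^2\int_\Omega \dot h$, the coefficients of $\int_\Omega \dot h v$ and of $K\int_\Omega h v v_r$ both collapse to the factor $1+\tfrac{2}{n}$, producing \eqref{uguale}. No analytic step is delicate: the real content of the argument is the identity $\Delta H=n\dot h$, which plays the role of a ``warped-product Pohozaev datum'' and is what allows the Euclidean-style integration-by-parts scheme to close even in the presence of the extra term $nKv$.
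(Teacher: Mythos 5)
Your computations are all correct: the integrated form of \eqref{div_hvDv}, the auxiliary identity $\int_\Omega \dot h v^2=-\tfrac 2n\int_\Omega h v v_r$ (which indeed follows from $\nabla H=h\partial_r$, $\Delta H=n\dot h$ and $v=0$ on $\partial\Omega$), and the final bookkeeping that collapses the coefficients to $1+\tfrac 2n$ all check out, so you do obtain \eqref{uguale} under the stated hypotheses. But your route is genuinely different from the paper's, and the difference matters. The paper proves \eqref{uguale} by integrating the Poho\v{z}aev identity \eqref{Poho_identity} for the field $X=h\partial_r$, rewritten as \eqref{pohoz_final}; the only inputs are the equation \eqref{problem1v} and the overdetermined condition $|\nabla v|=c$ on $\partial\Omega$ (which produces the boundary term $-\tfrac{c^2}{n}\int_{\partial\Omega}X\cdot\nu$). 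In particular the hypothesis \eqref{nabla2v_eq} is never used in the paper's argument -- it is in effect a misprint in the statement (as it is in Corollary \ref{coroll_Pfunc}, whose proof likewise only uses $\Delta P\ge 0$). Your proof, by contrast, consumes \eqref{nabla2v_eq} as its main engine: you pass through $\Delta P(v)=0$ and the maximum principle to get $P(v)\equiv c^2$, and then \eqref{uguale} is just $\int_\Omega\dot h\,(P(v)-c^2)=0$ unwound by the two integration-by-parts identities.

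This is where the real issue lies. In the proof of Theorem \ref{thm_main}, Lemma \ref{lemma_poho} is invoked precisely in order to contradict the strict inequality \eqref{minore_stretto} of Corollary \ref{coroll_Pfunc} and thereby conclude $P(v)\equiv c^2$, i.e.\ before \eqref{nabla2v_eq} is known. For that purpose the identity \eqref{uguale} must be established unconditionally for the solution of the overdetermined problem, which is exactly what the Poho\v{z}aev computation delivers. A proof of the lemma that assumes \eqref{nabla2v_eq} (equivalently, that assumes $P(v)\equiv c^2$) cannot be inserted into that scheme without making the whole argument circular: you would be deriving $P(v)\equiv c^2$ from itself. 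So, while your argument is a valid proof of the statement as literally worded, it misses the key ingredient of the paper's proof -- the divergence identity \eqref{Poho_identity}, whose boundary contribution is controlled solely by $v=0$ and $|\nabla v|=c$ on $\partial\Omega$ -- and that ingredient is what the lemma is actually for. The fix is to keep your two unconditional identities (they are essentially the paper's \eqref{div_hvDv} and the relation $X\cdot\nabla v=\diver(vX)-n\dot h v$) but replace the step ``$P\equiv c^2$'' by the integration of \eqref{pohoz_final}, which supplies the missing relation $c^2\int_\Omega\dot h=\int_\Omega\dot h\,P(v)$ without any rigidity assumption.
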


\begin{proof}
We first consider a generic sufficiently smooth function $v$ (not necessarily a solution to \eqref{problem1v}).  We consider the Poho\v{z}aev identity in space forms (see e.g. \cite{CV})
\begin{equation}\label{Poho_identity}
{\rm div} \left(\frac{|\nabla v|^2}{2}X-hv_r \nabla v\right)=\frac{n-2}{2}\dot h|\nabla v|^2-hv_r\Delta v \,,
\end{equation}
where $X$ is the radial vector field 
$$
X = h \partial_r.
$$
Since
$$
X \cdot \nabla v = \diver (v X ) - n \dot h v \,,
$$
we have that
\begin{multline*}
\frac 1n \diver \left(|\nabla v|^2X-2(X \cdot \nabla v) \nabla v\right) - \frac{n-2}{n}\left(\diver( \dot h v \nabla v) - \dot h v \Delta v + K v X \cdot \nabla v \right) \\+ \frac 2n ( \diver (v X ) - n \dot h v)\Delta v = 0 \,,
\end{multline*}
which we write as
\begin{multline*}
\frac 1n \diver \left(|\nabla v|^2X-2(X \cdot \nabla v) \nabla v\right) \\  
- \frac{n-2}{n}\left(\diver( \dot h v \nabla v) - \dot h v (\Delta v + nKv) + n K \dot h v^2 + K v X \cdot \nabla v \right) \\ 
+ \frac 2n ( \diver (v X ) - n \dot h v)(\Delta v + n Kv) - 2 ( \diver (v X ) - n \dot h v) Kv= 0\,,
\end{multline*}
i.e.
\begin{multline} \label{pohoz_final}
\frac 1n \diver \left(|\nabla v|^2X-2(X \cdot \nabla v) \nabla v\right) \\  
- \frac{n-2}{n}\diver( \dot h v \nabla v) - \frac{n+2}{n} \dot h v (\Delta v + nKv) + \frac 2n  (\diver (v X )) (\Delta v + n Kv) \\
+( n+2)  K \dot h v^2 - \frac{n-2}{n} K v X \cdot \nabla v  - 2 K v  \diver (v X ) = 0\,.
\end{multline}
Now we assume that $v$ is a solution to \eqref{problem1v} satisfying \eqref{nabla2v_eq}, and we integrate \eqref{pohoz_final}
\begin{multline} 
- \frac{c^2}{n} \int_{\partial \Omega} X \cdot \nu
+ \frac{n+2}{n} \int_\Omega \dot h v 
+( n+2)  K  \int_\Omega \dot h v^2 \\- \frac{n-2}{n} K \int_\Omega v X \cdot \nabla v - 2 K \int_\Omega v  \diver (v X )  = 0\,,
\end{multline}
i.e.
\begin{equation*} 
- \frac{c^2}{n} \int_{\partial \Omega} X \cdot \nu
+ \frac{n+2}{n} \int_\Omega \dot h v 
-( n-2)  K  \int_\Omega \dot h v^2 + \left(\frac{2}{n}-3\right) K \int_\Omega v X \cdot \nabla v  = 0\,,
\end{equation*}
and since $\diver X=n \dot h$ we obtain
\begin{equation*} 
c^2 \int_\Omega \dot h =  \frac{n+2}{n} \int_\Omega \dot h v 
-( n-2)  K  \int_\Omega \dot h v^2 + \left(\frac{2}{n}-3\right) K \int_\Omega v X \cdot \nabla v  \,.
\end{equation*}
From \eqref{div_hvDv} we obtain \eqref{uguale}.
\end{proof}

The following result is known but, beside the Euclidean case  (see \cite{reilly2}[Lemma 3]), we didn't find a proof in letterature and we provide 
a detailed proof for reader's convenience.

\begin{lemma} \label{lemma_Obata}
Let $\Omega$ be a bounded connected domain in $M$ and assume that there exists a function $v\colon \bar \Omega\to \mathbb R$, with $v \in C^1(\overline \Omega) \cap C^2(\Omega)$, such that 
\begin{equation} \label{luigifava}
\begin{cases}
\nabla^2 v = (-\frac{1}{n}-Kv) g & \text{in } \Omega \,,\\
v=0 & \text{on } \partial \Omega \,.
\end{cases}
\end{equation}
Then $\Omega$ is a geodesic ball $B_R$ and $v$ depends only on  the center of $B_R$. 
\end{lemma}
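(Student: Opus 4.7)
The plan is a standard Obata-type rigidity argument. First, I would produce an interior critical point $p_0\in\Omega$ of $v$: if $v$ were constant on $\Omega$, the boundary condition and continuity would force $v\equiv 0$, but then \eqref{luigifava} would give $0=\nabla^2 v=-g/n$, a contradiction. Hence continuity of $v$ on the compact set $\bar\Omega$ together with $v|_{\partial\Omega}=0$ yields a nonzero extremum at some interior $p_0$, at which $\nabla v(p_0)=0$.

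Next, I would show that $v$ is radial about $p_0$. Along any unit-speed geodesic $\gamma$ with $\gamma(0)=p_0$, the function $u(t):=v(\gamma(t))$ satisfies
\[
u''(t)+K\,u(t)=-\tfrac{1}{n},\qquad u(0)=v(p_0),\quad u'(0)=0,
\]
directly from \eqref{luigifava}. Its unique solution $U(t)$ is direction-independent (an explicit elementary function depending on $K$). To make this globally rigorous, a direct computation in warped-product coordinates centered at $p_0$ shows that the radial model $w(x):=U(d(x,p_0))$ satisfies the same Hessian equation $\nabla^2 w=-(1/n+Kw)\,g$ on $M$ away from the cut locus of $p_0$, which does not meet $\bar\Omega$. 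The difference $f:=v-w$ then satisfies the Obata-type equation $\nabla^2 f=-Kf\,g$ with $f(p_0)=0$ and $\nabla f(p_0)=0$. The set $E:=\{x\in\Omega : f(x)=0 \text{ and } \nabla f(x)=0\}$ is closed by continuity and open, because applying the same ODE to $F(t)=f(\gamma(t))$ for geodesics from any $p\in E$ gives $F''+KF=0$ with zero Cauchy data, whence $F\equiv 0$ and so $f\equiv 0$ in a normal neighborhood of $p$. Connectedness of $\Omega$ forces $E=\Omega$, so $v(x)=U(d(x,p_0))$ throughout $\Omega$.

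Finally, I would conclude $\Omega=B_R(p_0)$, where $R>0$ is the unique first positive zero of $U$ (with $U'(R)\neq 0$, as is clear from the explicit forms). Continuity of $v$ up to the boundary with $v|_{\partial\Omega}=0$ gives $\partial\Omega\subset\{U\circ d(\cdot,p_0)=0\}=\partial B_R(p_0)$. A short geodesic argument -- any minimizing geodesic from $p_0$ to a point at distance $<R$ cannot exit $\Omega$ before hitting $\partial\Omega\subset\partial B_R(p_0)$, so such a point must lie in $\Omega$ -- gives $B_R(p_0)\subset\Omega$. To pin down the shape of $\Omega$, I use $v\in C^1(\bar\Omega)$ and $\nabla v=U'(R)\,\nabla d(\cdot,p_0)\neq 0$ on $\partial\Omega$: by the implicit function theorem, $\partial\Omega$ is locally a smooth hypersurface, hence both open and closed in the connected sphere $\partial B_R(p_0)$, giving $\partial\Omega=\partial B_R(p_0)$; the unique bounded connected open set with this boundary containing $p_0$ is then $B_R(p_0)$.

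The main obstacle will be this last geometric step: the statement places no a priori regularity on $\partial\Omega$, so the pointwise radial identity must be upgraded to the global conclusion on the shape of $\Omega$. The key ingredients are the $C^1$ regularity of $v$ up to $\partial\Omega$ and the nonvanishing of $U'(R)$, which together force $\partial\Omega$ to be an entire distance sphere centered at $p_0$.
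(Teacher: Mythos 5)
Your overall strategy is the same as the paper's: produce an interior critical point $p_0$ and integrate the Hessian equation along geodesics from $p_0$ to identify $v$ with the explicit radial profile $U(d(\cdot,p_0))$. You differ in two ways worth noting. First, you obtain the critical point from a simple non-constancy argument, while the paper first proves $v>0$ by the maximum principle (for $K=1$ via the first Dirichlet eigenfunction of the hemisphere) and takes the interior maximum; your route is more elementary but forfeits the sign information on $a=v(p_0)$, which you tacitly use later. Second, your comparison step ($f=v-U\circ d$, $\nabla^2 f=-Kf\,g$, and the open/closed set $E$) is a genuine improvement in rigor: the paper passes from ``the same ODE holds along every geodesic from $p$'' to ``$v$ is radial'' without addressing that such geodesics may leave $\Omega$ (implicitly assuming $\Omega$ geodesically star-shaped about $p$), and it stops there, whereas your connectedness argument removes that assumption and you go on to identify $\Omega$ with a ball, which the paper leaves unargued.

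That final identification is where your write-up has two concrete soft spots. (i) The existence of a first positive zero $R$ of $U$ is not ``clear from the explicit forms'' for an arbitrary value $a=v(p_0)$: if the nonzero extremum you chose is a negative minimum, then for $K=0$ and $K=-1$ the profile $U$ never vanishes, and for $K=-1$ even $a\ge \tfrac1n$ gives $U(t)=(a-\tfrac1n)\cosh t+\tfrac1n>0$ for all $t$; in the spherical case a negative $a$ changes the structure of the zeros as well. These cases must be excluded, e.g.\ by observing that $\partial\Omega\neq\emptyset$ and $v=U\circ d$ on $\bar\Omega$ with $v=0$ on $\partial\Omega$ force $U$ to vanish at some positive distance -- this is the role played in the paper by the positivity step $v>0$. (ii) The implicit function theorem step is not legitimate as stated: $v$ is only in $C^1(\bar\Omega)$ and is not defined on a full neighborhood of a boundary point, and knowing $\partial\Omega\subset\partial B_R(p_0)$ does not make $\partial\Omega$ itself locally a hypersurface; the relative openness of $\partial\Omega$ in $\partial B_R(p_0)$ is exactly what needs proof, so the inference is circular at that point. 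Both gaps close at once with a more elementary finish: the distance function $d(\cdot,p_0)$ has no interior local maximum in $\Omega$ (prolong a minimizing geodesic from $p_0$ slightly past an interior point, which stays in $\Omega$ and strictly increases the distance; this is valid in all three models for the distances occurring here), hence its maximum over $\bar\Omega$ is attained only on $\partial\Omega$ and equals $R$, and no interior point can realize $d=R$; therefore $\Omega\subset B_R(p_0)$, which together with your inclusion $B_R(p_0)\subset\Omega$ gives $\Omega=B_R(p_0)$ without any appeal to the implicit function theorem, to $U'(R)\neq0$, or to the open/closed argument on the sphere.
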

\begin{proof}
We first notice that $v>0$ in $\Omega$. This follows from the standard maximum principles when $K=0,-1$ (see e.g. \cite{sperb}[theorem 2.6]). Now we consider the case $K=1$. We recall that the first eigenvalue of the Dirichlet Laplacian on the hemisphere is $n$ and the corresponding eigenfunction $\phi$ is strictly positive. By writing $v=w \phi$ we see that $w$ satisfies
$$
\begin{cases}
\Delta w + 2 \frac{\nabla \phi}{\phi} \cdot \nabla w < 0 & \text{in } \Omega \\
w= 0 & \text{on } \partial \Omega \,,
\end{cases}
$$
which implies that $w>0$ in $\Omega$ again by \cite{sperb}[Theorem 2.6]. Hence $v>0$ in $\Omega$.

Since $v>0$ it achieves the maximum at a point $p \in \Omega$, with $v(p)=a>0$.  Let $\gamma\colon I \to M$ be a unit speed maximal geodesic satisfying
$\gamma(0)=p$ and let $f(s)=v(\gamma(s))$.  
From \eqref{luigifava} it follows 
$$
\ddot f(s)= -\frac{1}{n}-K f(s)\,,\quad \dot f(0)=0\,,\quad f(0)=a \,,
$$
and therefore
$$
f(s)=\left(a-\frac 1n\right) H(s)-\frac 1n \,.
$$
This implies that $v$ has the same expression along any geodesic starting from $p$, and hence $v$ depends only on the distance from $p$, which completes the proof.
\end{proof}

\begin{proof}[Proof of Theorem $\ref{thm_main}$] 
Corollary \ref{coroll_Pfunc} and Lemma \ref{lemma_poho} imply that $P(v)=c^2$ and from Lemma \ref{lemma_P_func} we find that $v$ satisfies \eqref{nabla2v_eq}. Lemma \ref{lemma_Obata} gives that $\Omega$ is a geodesic ball $B_R$ and $v$ depends only on the distance from the center of $B_R$. 
\end{proof}


\begin{thebibliography}{CLS}

\bibitem{CGS} L. Caffarelli, N. Garofalo, and F. Seg\`ala, A gradient bound for entire solutions of quasi-linear
equations and its consequences, {\em Comm. Pure Appl. Math.}, 47 (1994), 1457--1473.


\bibitem{CV}
G. Ciraolo, L. Vezzoni, A rigidity problem on the round sphere, to appear in  {\em  Commun. Contemp. Math.}. {\tt arXiv:1611.02095}. 

\bibitem{FK} A. Farina, B. Kawohl, Remarks on an overdetermined boundary value problem. {\em Calc. Var. Partial Differential Equations} {\bf 31} (2008), 351--357.

\bibitem{FV} A. Farina, E. Valdinoci, A pointwise gradient estimate in possibly unbounded domains with nonnegative mean curvature. {\em Adv. Math.}, 225 (2010), no. 5, 2808--2827.

\bibitem{FGK} I. Fragal\`a, F. Gazzola, B. Kawohl, Overdetermined problems with possibly degenerate ellipticity, a geometric approach. {\em Math. Z.} {\bf 254} (2006), 117--132.

\bibitem{GL} N. Garofalo, J.L. Lewis, A symmetry result related to some overdetermined boundary value problems, {\em Amer. J. Math.} {\bf 111} (1989), 9--33.


\bibitem{KP} S. Kumaresan, J. Prajapat, Serrin's result for hyperbolic space and sphere. {\em Duke Math. J.} {\bf 91} (1998), 17--28.

\bibitem{Mo} R. Molzon, Symmetry and overdetermined boundary value problems. {\em Forum Math.} {\bf 3} (1991), 143--156.

\bibitem{paul}
S. I. Poho\v{z}aev,
On the eigenfunctions of the equation $\Delta u+\lambda f(u)=0$,
{\em Dokl. Akad. Nauk SSSR} {\bf 165} (1965), 36--39. 

\bibitem{Payne} L.E. Payne, Some remarks on maximum principles, {\em J. Anal. Math.}, 30 (1976) 421-433.

\bibitem{QuiXia}
G. Qiu, C. Xia, A generalization of Reilly's formula and its applications to a new Heintze-Karcher type inequality. {\em Int. Math. Res. Not. IMRN} 2015, no. 17, 7608--7619. 


\bibitem{reilly1}
R. C. Reilly, Applications of the Hessian operator in a Riemannian manifold. {\em Indiana University Mathematics Journal} {\bf 26}, no. 3 (1977): 459--72.

\bibitem{reilly2}
R. C. Reilly, Geometric applications of the solvability of Neumann problems on a Riemannian manifold.  {\em Archive for Rational Mechanics and Analysis} {\bf 75}, no. 1 (1980): 23--29.


\bibitem{ronco}
A. Roncoroni, A serrin-type symmetry result on model manifolds: an extension of the weinberger argument. Preprint.



\bibitem{serrin}  J. Serrin, A symmetry problem in potential theory. {\em Arch. Rational Mech. Anal.}  {\bf 43} (1971), 304--318.

\bibitem{sperb} R. P. Sperb, Maximum Principles and Their Applications, Mathematics in Science and Engineering, vol. 157,
Academic Press Inc. [Harcourt Brace Jovanovich Publishers], New York, 1981.


\bibitem{We} H. F. Weinberger, Remark on the preceding paper of Serrin, {\em 
Arch. Rational Mech. Anal.} {\bf 43} (1971), 319--320.


\end{thebibliography}
\end{document}